\numberwithin{equation}{section}
\pgfplotsset{compat=1.16}
\newtheorem{thm}{Theorem}[section]
\newtheorem{cor}[thm]{Corollary}
\newtheorem{lem}[thm]{Lemma}
\newtheorem{rem}{Remark}
\theoremstyle{definition}
\newenvironment{pf}[1][]{%
 \vskip 3mm
 \noindent
 \ifthenelse{\equal{#1}{}}%
  {{\slshape Proof. }}%
  {{\slshape #1.} }%
 }%
{\qed\bigskip}
\DeclareMathOperator{\esssup}{ess\,sup}
\newcounter{alphabet}
\newenvironment{Thm}[1][]{\refstepcounter{alphabet}%
\bigskip%
\noindent%
{\bf Theorem \Alph{alphabet}}%
\ifthenelse{\equal{#1}{}}{}{ (#1)}%
{\bf .} \itshape}{\vskip 8pt}
\newcommand{\C}{{\mathbb C}}
\newcommand{\D}{{\mathbb D}}
\newcommand{\T}{{\mathbb T}}
\newcommand{\DD}{{\Delta}}
\renewcommand{\Re}{{\,\operatorname{Re}\,}}
\newcommand{\Gauss}{{\null_2F_1}}
\newcommand{\aand}{{\quad\text{and}\quad}}
\newcommand{\CCT}{{\mathcal{C}^1(\T)}}
\newcommand{\CT}{{\mathcal{C}(\T)}}
\newcommand{\CCD}{{\mathcal{C}^2(\D)}}
\newcommand{\Pa}{{\mathcal{P}_{\alpha}}}
\newcommand{\dth}{{\,d\theta}}
\newcounter{minutes}\setcounter{minutes}{\time}
\newcounter{hours}\setcounter{hours}{\time}
\begin{document}
\bibliographystyle{amsplain}
\title[$\alpha$-harmonic equation]
{Schwarz Lemma and Schwarz-Pick Lemma for
solutions of the $\alpha$-harmonic equation}
\def\thefootnote{}
\footnotetext{
\texttt{\tiny File:~\jobname .tex,
          printed: \number\year-\number\month-\number\day,
          \thehours.\ifnum\theminutes<10{0}\fi\theminutes}
}
\makeatletter\def\thefootnote{\@arabic\c@footnote}\makeatother

\author[M. Li]{Ming Li}
\address{
School of Mathematics and Statistics\\
Changsha University of Science and Technology\\
960, 2nd Section, Wanjiali RD (S)
Changsha,
410004 Hunan, China}
\email{minglimath@163.com}
\author[X.-S. Ma]{Xiu-Shuang Ma}
\address{School of Mathematical Sciences \\ 
Yuehai Campus, Shenzhen University\\
3688 Nanhai Avenue, Nanshan District, Shenzhen, China}
\email{maxiushuang@szu.edu.cn \\
maxiushuang@gmail.com}
\author[L.-M.~Wang]{Li-Mei Wang}
\address{School of Statistics,
University of International Business and Economics, No.~10, Huixin
Dongjie, Chaoyang District, Beijing 100029, China}
\email{wangmabel@163.com}

\keywords{$\alpha$-harmonic function, Schwarz lemma, Schwarz-Pick lemma,
 Poisson integral}
\subjclass[2020]{Primary 31A05; Secondary 33C05} 
\begin{abstract}
In this paper, the Schwarz type and Schwarz-Pick type
inequalities for solutions of $\alpha$-harmonic equation for $\alpha>-1$
are investigated.
By making use of the integral of trigonometric functions,
we obtain the two types of inequalities in terms of hypergeometric functions
which improve the corresponding results due to
Khalfallah et al. (Complex Var. Elliptic Equ., 2023)
and Li et al. (Bull. Malays. Math. Sci. Soc., 2022).
\end{abstract}
\thanks{The first author was supported by
National Natural Science Foundation of China (No.12001063 and No.12171055).
The third author was supported by a grant of University of International Business and Economics (No. 78210418).
}
\maketitle

\section{Introduction}

Let $\C$ denote the complex plane and $\D,~\T$ be the unit disk and the unit circle respectively.
Olofsson and Wittsten \cite{OW13} introduced the weighted Laplace operator $\DD_\alpha$ by
$$
\DD_\alpha=\partial_z(1-|z|^2)^{-\alpha} \overline{\partial}_z, \quad z\in\D
$$
for $\alpha>-1$, where
$$
\partial_z=\frac{\partial}{\partial z}=\frac1{2}\left(\frac{\partial}{\partial x}-i\frac{\partial}{\partial y}\right)\aand\overline{\partial}_z=\frac{\partial}{\partial\bar z}=\frac1{2}\left(\frac{\partial}{\partial x}+i\frac{\partial}{\partial y}\right).
$$
We mention that the formal adjoint of $\DD_\alpha$
with respect to the standard real pairing is
the differential operator
$$
\DD^*_\alpha=\overline{\partial}_z(1-|z|^2)^{-\alpha} \partial_z, \quad z\in\D
$$
which leads to the discussion similar to $\DD_\alpha$.
Note that the case $\DD_0=\partial_z\overline{\partial}_z=\overline{\partial}_z\partial_z$ is the classical Laplacian for harmonic functions. See \cite{duren:harm} for basic knowledge of harmonic functions.

Recall that the Dirichlet boundary value problem is to find a function which solves a special partial differential equation in the given domain and takes prescribed values on the boundary of the domain. For $f\in\CCD$,
of particular interest is the homogeneous equation
\begin{equation}\label{eq:alf}
\DD_\alpha f=0,\quad z\in\D,
\end{equation}
and its associated Dirichlet problem
\begin{equation}\label{eq:drcl}
\left\{
\begin{aligned}
&\DD_\alpha f=0,\quad &z\in\D, \\
&f=f^*,\quad &z\in\T.
\end{aligned}\right.
\end{equation}
Here, the boundary data $f^*\in\mathcal{D^{\prime}}(\T)$ is a distribution on $\T$, and
the boundary condition in \eqref{eq:drcl} is interpreted
in the distributional sense that  $f(re^{i\theta})\to f^*(e^{i\theta})$ in $\mathcal{D^{\prime}}(\T)$ as $r\to 1^-$. We mention here if $f$ solves the equation \eqref{eq:drcl} for $\alpha\le-1$, $f$ reduces to $0$ (see \cite[Thm. 2.3]{olofsson14}).
The solutions to \eqref{eq:alf} are called {\it $\alpha$-harmonic functions}.
Note that 0-harmonic functions are simply harmonic functions.
For more details about $\alpha$-harmonic functions,  we refer to \cite{olofsson14, OW13} .

Let $\alpha>-1$. Olofsson and Wittsten \cite{OW13} showed that
if an $\alpha$-harmonic function $f$ satisfies
$$
\lim_{r\to1^-}f(re^{i\theta})=f^*(e^{i\theta})\in\mathcal{D^{\prime}}(\T),
$$
then it has the Poisson type integral
\begin{equation}\label{eq:fpa}
f(z)=\Pa[f^*](z)
=P_{\alpha}(z)*f^*(e^{i\theta})
=\frac1{2\pi}\int_0^{2\pi} P_\alpha(ze^{-i\theta})f^*(e^{i\theta})\dth,
\end{equation}
where
\begin{equation}\label{eq:knl}
P_\alpha(z)=\frac{(1-|z|^2)^{\alpha+1}}{(1-z)(1-\bar{z})^{\alpha+1}}
\end{equation}
is the complex Poisson kernel of $\alpha$-harmonic functions and $*$ denotes the convolution of distributions on $\T$.
Note that $P_0(z)=\frac{1-|z|^2}{|1-z|^2}$ is the classical Poisson kernel.
See \cite{CV15, olofsson14} for discussions on the corresponding real kernel
$$
K_\alpha(z)=c_\alpha\frac{(1-|z|^2)^{\alpha+1}}{|1-z|^{\alpha+2}},\quad\alpha>-1,
$$
of $\alpha$-harmonic functions, where
\begin{equation}\label{eq:c}
c_\alpha=\frac{\Gamma\left(\frac{\alpha}{2}+1\right)^2}{\Gamma(\alpha+1)}
\end{equation}
and $\Gamma(x)=\int_0^{\infty}t^{x-1}e^{-x}\,dt$ $(x>0)$ is the Gamma function.
The integral representation (\ref{eq:fpa}) of $\alpha$-harmonic functions plays a vital role
in the present paper.

The Schwarz lemma for analytic functions is extremely important in complex analysis.
Heinz \cite{heinz59} generalized the result to harmonic functions,
i.e., if $f$ is harmonic in $\D$ with $f(0)=0$ and $|f(z)|<1$, then
\begin{equation}\label{eq:4pi}
|f(z)|\le\frac4{\pi}\arctan{|z|},\quad z\in\D.
\end{equation}
Hethcode \cite{Hethcote77} 
dropped the condition $f(0)=0$ and proved that
\begin{equation}\label{eq:4pi0}
\left|f(z)-\frac{1-|z|^2}{1+|z|^2}f(0)\right|\le\frac4{\pi}\arctan{|z|},\quad z\in\D,
\end{equation}
for harmonic function $f$ in $\D$ satisfying $|f(z)|<1$.
Li, Rasila and Wang \cite{LRW20} struggled to generalize the Schwarz lemma to $\alpha$-harmonic functions. Li and Chen \cite{LC22} found a gap in \cite{LRW20},
and proved the following result.

\begin{Thm}$($\cite[Thm.1.1]{LC22}$)$\label{thm:lc11}
Suppose that $f^*\in\CCT$. If $f\in\CCD$ satisfies \eqref{eq:drcl} with $\alpha>-1$, then
 $|f(z)| \le M_1(|z|,\alpha)||f^*||_{\infty}$ for $z\in\D$, where
 $||f^*||_{\infty}=\mathop{\esssup}_{z\in\T}|f^*(z)|$,
\begin{equation*}\label{eq:lc11}
M_1(|z|,\alpha)=\left\{
\begin{aligned}
&\frac{2^{1+\alpha}}{\pi}\arctan\left(\frac{1+|z|}{1-|z|}\tan\frac{c\pi}2\right), &\alpha\ge0, \\
&\frac{2^{1-\alpha}}{\pi}(1-|z|^2)^{\alpha}\arctan\left(\frac{1+|z|}{1-|z|}\tan\frac{c\pi}2\right), &\quad -1<\alpha<0,
\end{aligned}\right.
\end{equation*}
and $c=\mathcal{P}_\alpha [|f^*|](0)/||f^*||_{\infty}$.
\end{Thm}

The upper bound $M_1(|z|,\alpha)$ in Theorem A is not very explicit and the number $c$ depends on the individual function $f$.
By observing that $c=0$ if and only if $f^{*}(e^{i\theta})\equiv 0$, we find that 
 $M_1(|z|,\alpha)\to +\infty$ for $-1<\alpha<0$ as $|z|\to 1^-$ unless $f^{*}(e^{i\theta})= 0$.
Recently, Khalfallah and Mateljević \cite{KM2023} constructed
a Schwarz type inequality involving $f(0)$ instead of $\mathcal{P}_\alpha [|f^*|](0)$
as follows.

\begin{Thm}$($\cite[Thm.3.2]{KM2023}$)$
Let $\alpha>-1$ and $f: \D \rightarrow \D$ be an $\alpha$-harmonic function. Then
$$
\left|f(z)-\frac{\left(1-|z|^2\right)^{\alpha+1}}{1+|z|^2} f(0)\right|\le M_2(|z|,\alpha)
$$
with
\begin{eqnarray*}
 M_2(|z|,\alpha)
=
\begin{cases}
&\dfrac{2^{\alpha+2}}{\pi} \arctan |z|+2^{\alpha+1}(1-|z|)\left(1-(1-|z|)^\alpha\right),
\quad  \alpha \geq 0,\\
& \dfrac{4}{\pi}(1-|z|)^\alpha \arctan |z|+(1-|z|)^\alpha-1,\qquad \qquad \quad
\alpha<0.
\end{cases}
\end{eqnarray*}
\end{Thm}

Colonna \cite{Colonna:89} generalized the classical Schwarz-Pick lemma for holomorphic functions to harmonic functions of the unit disk as follows.

\begin{Thm}$($\cite[Thm.3]{Colonna:89}$)$
If $f$ is a harmonic function from $\D$ into $\D$, then
 for $z\in\D$
\begin{equation}\label{eq:Colo}
||D_f(z)||\leq \frac{4}{\pi}\frac{1}{1-|z|^2},
\end{equation}
where $||D_f(z)||=|f_z(z)|+|f_{\bar z}(z)|$.
\end{Thm}

The  Schwarz-Pick type inequality for $\alpha$-harmonic
functions was studied by Li et al. \cite{LWX17}, and generalized
in \cite{LC22} and \cite{LRW20}.
Here we only mention the latest result on this topic.

\begin{Thm}$($\cite[Thm.1.2]{LC22}$)$\label{Thm:S-P}
Suppose that $f^*\in\CCT$. If $f\in\CCD$ satisfies \eqref{eq:drcl} with $\alpha>-1$, then for $z\in\D$,
\begin{equation*}\label{eq:lc11}
||D_f(z)||
\le\left\{
\begin{aligned}
&\frac{(1+\alpha)2^{1+\alpha}}{1-|z|^2}||f^*||_{\infty}, &\alpha\ge0, \\
&\frac{2^{1-\alpha}}{(1-|z|^2)^{1-\alpha}}||f^*||_{\infty},\quad  &\quad -1<\alpha<0.
\end{aligned}\right.
\end{equation*}
\end{Thm}

For the related topics on Schwarz lemma and Schwarz-Pick lemma, see \cite{Chen2011, KC2012}.

Our first aim is to establish a Schwarz type inequality (see Theorem \ref{thm:swzi} below)
in the spirit of \cite{KM2023} and give a uniform bound for $\alpha$-harmonic functions in Theorem D.
Secondly, refined Schwarz-Pick
type inequalities for $\alpha$-harmonic functions
are provided in Theorem \ref{thm:swz-pk} and Corollary \ref{cor:swz-pk}.

%
%

Before stating the main results, we introduce the definition of hypergeometric functions here.
For complex numbers $a,~b$ and $c$ with $c\neq 0,-1,-2,\dots$, the hypergeometric function is defined by the power series
$$
\Gauss(a,b;c;x)=\sum_{n=0}^{\infty}\frac{(a)_n(b)_n}{(c)_n n!}x^n,\quad |x|<1,
$$
where $(a)_0=1$, $(a)_{n}=a(a+1)\dots(a+n-1)$ for $n=1,2,\dots$ are the Pochhammer symbols.
Note that $\Gauss(a,b;c;x)=\Gauss(b,a;c;x)$ by definition.
An important result on hypergeometric functions is Gauss summation formula
\begin{equation}\label{eq:gauss-prop2}
\lim_{x\to1^-} \Gauss(a,b;c;x)=\frac{\Gamma(c)\Gamma(c-a-b)}{\Gamma(c-a)\Gamma(c-b)}=\frac{B(c,c-a-b)}{B(c-a,c-b)}
\end{equation}
if $\Re{(a+b-c)}<0$ (see \cite[p.556, 15.1.20]{AbramowitzStegun:1965}), where $B(x,y)$ denotes the beta function. 
One can find more on special functions in \cite{AbramowitzStegun:1965, AAR99:spcl}.


\section{Statement of main results}
The main results are stated in this section.
The following two theorems provide the Schwarz type inequalities for $\alpha$-harmonic functions.

\begin{thm}\label{thm:swzi}
Suppose that $f^*\in\CT$. If $f\in\CCD$ satisfies \eqref{eq:drcl} with $\alpha>-1$ and maps $\D$ into $\D$, then for $z\in\D$,
$$
\left|f(z)-\frac{(1-r^2)^{\alpha+1}}{1+r^2}f(0)\right|\\
\leq M(r,\alpha),
$$
where $r=|z|$ and $M(r,\alpha)$ is
\begin{equation*}
\left\{
\begin{aligned}
&\dfrac{(1-r^2)^{\alpha+1}|(1-r)^{-\alpha}-1|}{1+r^2}+\dfrac{2^{2+\frac{\alpha}{2}} r(1+r^2)^{\frac{\alpha}{2}-1}}{\pi}
\Gauss\left(\frac{1}{2},\frac{1}{2}-\frac{\alpha}{2};\frac{3}{2};\frac{4r^2}{(1+r^2)^2}\right),
\quad \alpha\ge 0,\\
&\dfrac{(1-r^2)^{\alpha+1}|(1-r)^{-\alpha}-1|}{1+r^2}
+\dfrac{4 r}{\pi}(1+r^2)^{\frac{\alpha}{2}-1}
\Gauss\left(\frac{1}{2},\frac{1}{2}-\frac{\alpha}{2};\frac{3}{2};\frac{4r^2}{(1+r^2)^2}\right),
\quad -1<\alpha<0.
\end{aligned}\right.
\end{equation*}
\end{thm}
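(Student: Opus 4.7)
The plan is to reduce the inequality to a kernel estimate via the Poisson representation \eqref{eq:fpa}. Since $P_\alpha(0)=1$, we have $f(0)=\frac{1}{2\pi}\int_0^{2\pi}f^*(e^{i\theta})\,d\theta$; writing $A:=\frac{(1-r^2)^{\alpha+1}}{1+r^2}$ and subtracting $A\,f(0)$ from both sides of \eqref{eq:fpa} gives
\[
f(z)-A\,f(0)=\frac{1}{2\pi}\int_0^{2\pi}[P_\alpha(ze^{-i\theta})-A]\,f^*(e^{i\theta})\,d\theta.
\]
Since $f:\D\to\D$ is continuous up to the boundary (its trace $f^*\in\CT$), we have $|f^*|\le 1$ on $\T$; the triangle inequality together with the rotational substitution $\phi=\arg z-\theta$ reduces the theorem to bounding $\frac{1}{2\pi}\int_0^{2\pi}|P_\alpha(re^{i\phi})-A|\,d\phi$.

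Using the factorisation $(1-re^{i\phi})(1-re^{-i\phi})^{\alpha+1}=(1-2r\cos\phi+r^2)(1-re^{-i\phi})^\alpha$, I would rewrite $P_\alpha(re^{i\phi})=(1-r^2)^\alpha P_0(re^{i\phi})(1-re^{-i\phi})^{-\alpha}$, where $P_0$ is the classical Poisson kernel. Since $A=(1-r^2)^\alpha P_0(ir)$, this leads to the clean decomposition
\[
P_\alpha(re^{i\phi})-A=(1-r^2)^\alpha(1-re^{-i\phi})^{-\alpha}[P_0(re^{i\phi})-P_0(ir)]+A\,[(1-re^{-i\phi})^{-\alpha}-1].
\]
The Maclaurin coefficients of $(1-w)^{-\alpha}-1$ have a common sign for every $\alpha>-1$ (all positive when $\alpha\ge 0$, all of sign $(-1)^{n-1}$ when $-1<\alpha<0$), so $|(1-re^{-i\phi})^{-\alpha}-1|\le|(1-r)^{-\alpha}-1|$ pointwise in $\phi$. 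Integrating the second summand therefore produces exactly $\frac{(1-r^2)^{\alpha+1}|(1-r)^{-\alpha}-1|}{1+r^2}$, the first term of $M(r,\alpha)$.

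For the first summand, combining $|(1-re^{-i\phi})^{-\alpha}|=(1-2r\cos\phi+r^2)^{-\alpha/2}$ with the explicit formula $|P_0(re^{i\phi})-P_0(ir)|=\frac{2r(1-r^2)|\cos\phi|}{(1+r^2)(1-2r\cos\phi+r^2)}$ and folding via $\phi\mapsto\phi+\pi$ reduces its contribution to
\[
\frac{2r(1-r^2)^{\alpha+1}}{\pi(1+r^2)}\int_0^{\pi/2}\cos\phi\cdot\frac{(1-2r\cos\phi+r^2)^{1+\alpha/2}+(1+2r\cos\phi+r^2)^{1+\alpha/2}}{[(1+r^2)^2-4r^2\cos^2\phi]^{1+\alpha/2}}\,d\phi.
\]
The elementary inequality $1\pm 2r\cos\phi+r^2\le 2(1+r^2)$ (equivalent to $(1-r)^2\ge 0$) majorises the numerator by $2^{1+\alpha/2}(1+r^2)^{1+\alpha/2}$ when $\alpha\ge 0$; for $-1<\alpha<0$ the concavity inequality $(a+b)^s+(a-b)^s\le 2a^s$ (valid for $s\in(1/2,1)$) yields the sharper majorant $2(1+r^2)^{1+\alpha/2}$. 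The remaining integral $\int_0^{\pi/2}\cos\phi\,d\phi/[(1+r^2)^2-4r^2\cos^2\phi]^{1+\alpha/2}$ is transformed by $u=\sin\phi$ and $v=2ru/(1-r^2)$ into $\int_0^{2r/(1-r^2)}(1+v^2)^{-(1+\alpha/2)}dv$, after which the identity $\int_0^t(1+v^2)^{-s}dv=t\,\Gauss(\tfrac12,s;\tfrac32;-t^2)$ and a Pfaff transformation identify the outcome with $\Gauss(\tfrac12,\tfrac12-\tfrac\alpha2;\tfrac32;\tfrac{4r^2}{(1+r^2)^2})$, producing the second summand of $M(r,\alpha)$. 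The main obstacle is the dichotomy at $\alpha=0$: the sign of $\alpha$ switches $x\mapsto x^{1+\alpha/2}$ between convex and concave, and this is precisely what forces the different constants $2^{2+\alpha/2}$ (for $\alpha\ge 0$) and $4$ (for $-1<\alpha<0$) in the final expression.
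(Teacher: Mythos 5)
Your outline is correct and does reach the stated bounds, but the way you handle the key integral is genuinely different from the paper's. Your decomposition of $P_\alpha(re^{i\phi})-A$ is, after clearing denominators, exactly the splitting the paper uses (its integrand $\bigl||1-\zeta|^2(1-\zeta)^\alpha-1-r^2\bigr|/|1-\zeta|^{\alpha+2}$ separates into the same two pieces), and your treatment of the second summand is identical: the Maclaurin coefficients of $(1-w)^{-\alpha}-1$ all have one sign, so $|(1-re^{-i\phi})^{-\alpha}-1|\le|(1-r)^{-\alpha}-1|$. (Small slip there: for $-1<\alpha<0$ those coefficients are all \emph{negative}, not of sign $(-1)^{n-1}$; constancy of sign is precisely what your pointwise estimate requires, so the parenthetical should be corrected.) The divergence is in the first summand. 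The paper expands $|\cos\theta|(1-R\cos\theta)^{-1-\alpha/2}$ termwise, integrates with Wallis-type formulas, and compares the resulting coefficients $Q_n$ with their monotone limit $2^{\alpha/2}$ (using a monotonicity lemma from \cite{SWW23} and the duplication formula) before applying the Euler transform. You instead fold the integral onto $[0,\pi/2]$, bound the numerator pointwise by convexity/concavity of $x\mapsto x^{1+\alpha/2}$, and evaluate the surviving integral in closed form via $u=\sin\phi$, the identity $\int_0^t(1+v^2)^{-s}\,dv=t\,\Gauss(\tfrac12,s;\tfrac32;-t^2)$ and a Pfaff transformation. Your route is more self-contained, at the price of invoking that identity at the argument $-4r^2/(1-r^2)^2$, which leaves the disc of convergence when $r$ is close to $1$; you should say explicitly that the identity persists by analytic continuation before the Pfaff transform returns the argument to $4r^2/(1+r^2)^2<1$. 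Both methods locate the source of the dichotomy at $\alpha=0$ correctly (monotone limit $2^{\alpha/2}$ vs.\ $Q_0=1$ in the paper; convexity vs.\ concavity in yours).

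One justification needs repair. For $\alpha\ge0$ the inequality $1\pm2r\cos\phi+r^2\le2(1+r^2)$ bounds \emph{each} of the two terms in your numerator by $2^{1+\alpha/2}(1+r^2)^{1+\alpha/2}$, hence their sum only by $2^{2+\alpha/2}(1+r^2)^{1+\alpha/2}$; fed through the rest of your computation this produces the final constant $2^{3+\alpha/2}$, not the claimed $2^{2+\alpha/2}$. The bound you actually need,
\begin{equation*}
(1-2r\cos\phi+r^2)^{1+\frac{\alpha}{2}}+(1+2r\cos\phi+r^2)^{1+\frac{\alpha}{2}}
\le 2^{1+\frac{\alpha}{2}}(1+r^2)^{1+\frac{\alpha}{2}},
\end{equation*}
is still true and elementary, but it does not follow from the inequality you cite: one should note that for $s\ge1$ the function $c\mapsto(1+c)^s+(1-c)^s$ is increasing on $[0,1]$ with maximum $2^s$ at $c=1$, and apply this with $c=2r\cos\phi/(1+r^2)\le1$. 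With that substitution your argument goes through and recovers $M(r,\alpha)$ exactly, including the check $\tfrac{4}{\pi}\arctan r$ at $\alpha=0$.
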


In view of the equation \eqref{eq:gauss-prop2},
we find that $\displaystyle \lim_{r\to1^-}M(r,\alpha)$
is bounded for $\alpha>-1$.
Figure 1 is produced by Mathematica
demonstrating the upper bound $M_2(r,\alpha)$ in Theorem B
and $M(r,\alpha)$ in Theorem \ref{thm:swzi}.
Thus Theorem \ref{thm:swzi} refines the Schwarz
type inequality in Theorem B.

\begin{figure}[htbp]
\begin{center}
\includegraphics[width=.6\textwidth]{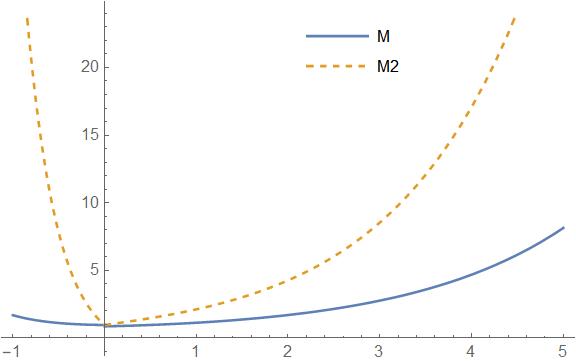}
\end{center}
\caption{The upper bounds in Theorems B and \ref{thm:swzi} for $r=0.99$.}
\label{Figure 1}
\end{figure}

\begin{rem}
Putting $\alpha=0$ in Theorem \ref{thm:swzi}, the upper bound reduces to
\begin{align*}
\frac{4r}{(1+r^2)\pi}\frac{\arcsin \frac{2r}{1+r^2}}{\frac{2r}{1+r^2}}
=\frac{2}{\pi}\arcsin \frac{2r}{1+r^2}
=\frac{4}{\pi}\arctan r,
\end{align*}
which is exactly the Hethcode's Schwarz type inequality $($\ref{eq:4pi0}$)$ for harmonic functions.
\end{rem}

We next obtain the following concise form for estimate in Theorem \ref{thm:swzi},
which provides a better upper bound than the one in
\cite[Cor.3.1]{KM2023}.
\begin{cor}\label{cor:swzi}
Suppose that $f^*\in\CT$. If $f\in\CCD$ satisfies \eqref{eq:drcl} with $\alpha\geq 0$ and maps $\D$ into $\D$, then
$$
\left|f(z)-\frac{(1-|z|^2)^{\alpha+1}}{1+|z|^2}f(0)\right|
\leq M'(|z|,\alpha)
$$
holds for $z\in\D$ with
\begin{eqnarray*}
M'(|z|,\alpha)
=
\begin{cases}
&2^{1+\alpha}|z|\left(\dfrac{1}{\pi}+\alpha\right),
\quad\qquad   \quad \alpha \geq 2,\\
&|z|\left(\dfrac{2^{2+\frac{\alpha}{2}}}{\pi}+2^{1+\alpha}\alpha\right),
\quad \quad\quad  1\le \alpha<2,\\
& 2^{1+\frac{\alpha}{2}}|z|+2^{1+\alpha}(1-|z|)|z|^\alpha ,
\quad  0\le \alpha <1.
\end{cases}
\end{eqnarray*}
\end{cor}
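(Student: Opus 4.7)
The plan is to start from the bound $M(r,\alpha)$ of Theorem \ref{thm:swzi} and majorize its two summands separately in each parameter range. Set
\[
T_1(r,\alpha) = \frac{(1-r^2)^{\alpha+1}|(1-r)^{-\alpha}-1|}{1+r^2}, \qquad u = \frac{2r}{1+r^2},
\]
and let $T_2(r,\alpha)$ denote the hypergeometric summand in $M(r,\alpha)$. Since $\alpha\ge 0$ gives $(1-r)^{-\alpha}\ge 1$, the absolute value in $T_1$ drops. For $\alpha\ge 1$ I would apply the mean value theorem to $t\mapsto(1-t)^{-\alpha}$ on $[0,r]$ to obtain $(1-r)^{-\alpha}-1\le\alpha r(1-r)^{-\alpha-1}$; combined with $(1+r)^{\alpha+1}\le 2^{\alpha+1}$ and $1+r^2\ge 1$, this yields $T_1\le 2^{\alpha+1}\alpha r$. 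For $0\le\alpha<1$ I would factor
\[
(1-r^2)^{\alpha+1}\bigl((1-r)^{-\alpha}-1\bigr) = (1+r)^{\alpha+1}(1-r)\bigl[1-(1-r)^\alpha\bigr]
\]
and invoke the subadditivity $1=\bigl((1-r)+r\bigr)^\alpha\le(1-r)^\alpha+r^\alpha$ of $x\mapsto x^\alpha$ on $[0,\infty)$ to get $1-(1-r)^\alpha\le r^\alpha$, so $T_1\le 2^{\alpha+1}(1-r)r^\alpha$.

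For $T_2$ the key is to recast the hypergeometric factor as an integral amenable to estimation. Using the symmetry $\Gauss(a,b;c;x)=\Gauss(b,a;c;x)$ together with Euler's integral representation (applied with parameters $(1/2-\alpha/2,1/2;3/2)$) and the substitution $s=\sqrt{t}$, I would derive
\[
\Gauss\!\left(\frac{1}{2},\frac{1}{2}-\frac{\alpha}{2};\frac{3}{2};u^2\right) = \int_0^1 (1-u^2 s^2)^{(\alpha-1)/2}\,ds.
\]
For $\alpha\ge 1$ the integrand is at most $1$, so the hypergeometric value is at most $1$; combined with $(1+r^2)^{\alpha/2-1}\le 2^{\alpha/2-1}$ when $\alpha\ge 2$ and $(1+r^2)^{\alpha/2-1}\le 1$ when $1\le\alpha<2$, this gives $T_2\le 2^{1+\alpha}r/\pi$ and $T_2\le 2^{2+\alpha/2}r/\pi$ respectively.

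For $0\le\alpha<1$ the integrand exceeds $1$, so I would bound the hypergeometric series termwise via $(1/2-\alpha/2)_n\le(1/2)_n$, obtaining
\[
\Gauss\!\left(\frac{1}{2},\frac{1}{2}-\frac{\alpha}{2};\frac{3}{2};u^2\right) \le \Gauss\!\left(\frac{1}{2},\frac{1}{2};\frac{3}{2};u^2\right) = \frac{\arcsin u}{u} \le \frac{\pi}{2},
\]
where the last step uses the convexity of $\arcsin$ on $[0,1]$ (so that $\arcsin u\le(\pi/2)u$). With $(1+r^2)^{\alpha/2-1}\le 1$ this produces $T_2\le 2^{1+\alpha/2}r$. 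Summing the bounds on $T_1$ and $T_2$ within each of the three ranges $\alpha\ge 2$, $1\le\alpha<2$, and $0\le\alpha<1$ then reproduces the three branches of $M'(|z|,\alpha)$ exactly. The principal obstacle is the hypergeometric estimate: when $\alpha\ge 1$ the parameter $1/2-\alpha/2$ is nonpositive, blocking a direct Euler integral with that parameter and forcing one to exploit the symmetry of $\Gauss$ first; when $0\le\alpha<1$ the Euler integrand $(1-u^2 s^2)^{(\alpha-1)/2}$ exceeds $1$, so a crude pointwise bound is useless, and one must instead rely on a termwise comparison with the closed-form case $\alpha=0$ to extract the clean factor $\pi/2$.
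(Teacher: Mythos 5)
Your proposal is correct, and it diverges from the paper's proof precisely where the real work is: the hypergeometric summand. The treatment of the first summand is essentially the paper's (your mean-value-theorem bound $(1-r)^{-\alpha}-1\le\alpha r(1-r)^{-\alpha-1}$ for $\alpha\ge1$ is interchangeable with the paper's Bernoulli inequality $(1-r)^{1+\alpha}\ge1-(1+\alpha)r$, and the subadditivity argument $1-(1-r)^\alpha\le r^\alpha$ for $0\le\alpha<1$ is identical). For the second summand, however, the paper invokes Lemma \ref{lem:mono} to get $\Gauss\left(\frac12,\frac12-\frac{\alpha}{2};\frac32;t\right)\le1$ when $\alpha\ge1$, and for $0\le\alpha<1$ it uses the quadratic transformation \eqref{eq:quadratic} to rewrite $(1+r^2)^{\frac{\alpha}{2}-1}\Gauss\left(\frac12,\frac12-\frac{\alpha}{2};\frac32;R^2\right)$ as a quotient of two hypergeometric functions and then applies the Ponnusamy--Vuorinen monotonicity theorem (Lemma \ref{lem:mono2}) to bound that quotient by $1$. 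You replace both of these with elementary devices: the Euler integral $\int_0^1(1-u^2s^2)^{(\alpha-1)/2}\,ds$ (correctly accessed through the symmetry in $a,b$, since $\frac12-\frac{\alpha}{2}\le0$ for $\alpha\ge1$) immediately gives the bound $1$ for $\alpha\ge1$, and the termwise comparison $\left(\frac12-\frac{\alpha}{2}\right)_n\le\left(\frac12\right)_n$ together with $\Gauss\left(\frac12,\frac12;\frac32;u^2\right)=\arcsin(u)/u\le\pi/2$ handles $0\le\alpha<1$. Your route is more self-contained, avoiding both monotonicity lemmas; the price is a factor of $\pi/2$ in the third branch, where the paper's argument actually yields the sharper constant $\frac{2^{2+\alpha/2}}{\pi}|z|$ in place of $2^{1+\alpha/2}|z|$ — but since the corollary only asserts the weaker constant, your bound proves exactly the stated result.
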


\begin{thm}\label{thm:swz}
Suppose that $f^*\in\CT$. If $f\in\CCD$ satisfies \eqref{eq:drcl} with $\alpha>-1$, then for $z\in\D$,
\begin{equation}\label{eq:swz}
|f(z)| 
\le\Gauss\left(-\frac{\alpha}{2},-\frac{\alpha}{2};1;|z|^{2}\right)||f^*||_{\infty}.
\end{equation}
\end{thm}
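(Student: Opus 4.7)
The strategy is to apply the Poisson type integral representation \eqref{eq:fpa} directly, take absolute values, and evaluate the resulting circle integral of $|P_\alpha|$ by expanding into a double series. The desired hypergeometric bound then appears after one Euler transformation.

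First, from \eqref{eq:fpa} and the triangle inequality I would write
\begin{equation*}
|f(z)| \le \|f^*\|_\infty\cdot\frac{1}{2\pi}\int_0^{2\pi}\bigl|P_\alpha(ze^{-i\theta})\bigr|\,d\theta,
\end{equation*}
so everything reduces to computing (or sharply estimating) the integral on the right. Using the kernel \eqref{eq:knl} with $w=ze^{-i\theta}$, noting $|w|=r=|z|$ and $|1-w||1-\bar w|^{\alpha+1}=|1-w|^{\alpha+2}$, a change of variable $\phi=\theta-\arg z$ yields
\begin{equation*}
\frac{1}{2\pi}\int_0^{2\pi}|P_\alpha(ze^{-i\theta})|\,d\theta
= (1-r^2)^{\alpha+1}\cdot\frac{1}{2\pi}\int_0^{2\pi}(1-2r\cos\phi+r^2)^{-(\alpha+2)/2}\,d\phi.
\end{equation*}

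The key computation is the identity
\begin{equation*}
\frac{1}{2\pi}\int_0^{2\pi}(1-2r\cos\phi+r^2)^{-s}\,d\phi=\Gauss(s,s;1;r^2),\qquad |r|<1,
\end{equation*}
which I would establish by factoring $1-2r\cos\phi+r^2=(1-re^{i\phi})(1-re^{-i\phi})$, expanding each factor via the binomial series $(1-u)^{-s}=\sum_{n\ge 0}\frac{(s)_n}{n!}u^n$, multiplying the two series and integrating term by term; the orthogonality $\frac{1}{2\pi}\int_0^{2\pi}e^{i(m-n)\phi}d\phi=\delta_{mn}$ kills all cross terms and leaves $\sum_{n\ge 0}\frac{(s)_n^2}{(n!)^2}r^{2n}$. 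Applying this with $s=(\alpha+2)/2$ gives
\begin{equation*}
\frac{1}{2\pi}\int_0^{2\pi}|P_\alpha(ze^{-i\theta})|\,d\theta
=(1-r^2)^{\alpha+1}\,\Gauss\!\left(\tfrac{\alpha+2}{2},\tfrac{\alpha+2}{2};1;r^2\right).
\end{equation*}

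Finally, I would invoke Euler's transformation $\Gauss(a,b;c;x)=(1-x)^{c-a-b}\Gauss(c-a,c-b;c;x)$ with $a=b=(\alpha+2)/2$ and $c=1$, so that $c-a-b=-\alpha-1$ and $c-a=c-b=-\alpha/2$. This exactly cancels the factor $(1-r^2)^{\alpha+1}$ and produces $\Gauss(-\alpha/2,-\alpha/2;1;r^2)$, completing the proof. The only technical point that needs care is the convergence/validity of the termwise interchange of sum and integral in the main identity; this is justified by Fubini on $0<r<1$ using absolute convergence of $\sum\frac{(s)_n}{n!}r^n$. No step is genuinely hard, but the linchpin is recognising the series identity for the integral and then matching parameters via Euler's transformation to land on $(-\alpha/2,-\alpha/2;1;r^2)$ rather than the equivalent but less elegant form $(1-r^2)^{\alpha+1}\Gauss((\alpha+2)/2,(\alpha+2)/2;1;r^2)$.
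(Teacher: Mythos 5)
Your proposal is correct and follows essentially the same route as the paper: bound $|f(z)|$ by $||f^*||_{\infty}$ times the $L^1$-mean of $P_\alpha$, identify that integral as $(1-r^2)^{\alpha+1}\Gauss\bigl(\tfrac{\alpha+2}{2},\tfrac{\alpha+2}{2};1;r^2\bigr)$, and cancel the prefactor with Euler's transformation. The only (immaterial) difference is how the integral identity is established: you factor $1-2r\cos\phi+r^2=(1-re^{i\phi})(1-re^{-i\phi})$ and use orthogonality (the Parseval-type argument of Olofsson--Wittsten, which the paper itself cites as an alternative), whereas the paper obtains the same identity in Lemma \ref{lem:exp} via a cosine-power expansion together with a quadratic transformation.
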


It follows from the summation formula (\ref{eq:gauss-prop2}) that
\begin{equation}\label{eq:sum}
\displaystyle\lim_{r\to1^-}
\Gauss\left(-\frac{\alpha}{2},-\frac{\alpha}{2};1;r\right)=
\frac{1}{c_{\alpha}}, \quad \alpha>-1
\end{equation}
where $c_{\alpha}$ is given in \eqref{eq:c}.
We then infer from the identity \eqref{eq:sum} and Theorem \ref{thm:swz} the following $L^1$-means bound of the $\alpha$-harmonic Poisson kernel $P_{\alpha}$, which was also proved by Olofsson and Wittsten \cite[Cor. 2.8]{OW13} (see also \cite[Thm. 3.2]{olofsson14}
for the corresponding result of $K_{\alpha}$).
But our proof is more straightforward.

\begin{Thm}$($\cite[Cor.2.8]{OW13}$)$\label{Thm:unibound}
Let $\alpha>-1$. For $0 \leq r<1$ we have
$$
\begin{aligned}
\frac{1}{2 \pi} \int_{\mathbb{T}}\left|P_\alpha\left(r e^{i \theta}\right)\right|  d \theta & \leq \frac{1}{c_{\alpha}}\quad \text { and } \quad
\lim _{r \rightarrow 1^-} \frac{1}{2 \pi} \int_{\mathbb{T}}\left|P_\alpha\left(r e^{i \theta}\right)\right| d \theta
& =\frac{1}{c_{\alpha}},
\end{aligned}
$$
where $c_{\alpha}$ is given in \eqref{eq:c}.
\end{Thm}

We turn to establish the Schwarz-Pick type inequality for $\alpha$-harmonic functions by making use of integrals of trigonometric functions.

\begin{thm}\label{thm:swz-pk}
Let $f$ be an $\alpha$-harmonic function in $\D$ with $\alpha>-1$ satisfying \eqref{eq:drcl} and  $f^*\in\CT$.
Then for $z\in\D$,
\begin{equation*}\label{eq:esti1}
||D_f(z)||
\le\left\{
\begin{aligned}
&\frac{2(1+\alpha)||f^*||_\infty}{1-|z|^2}\Gauss\left(-\frac{\alpha}{2},-\frac{\alpha}2;1;|z|^2\right), &\alpha\ge0, \\
&\frac{2||f^*||_\infty}{1-|z|^2}\Gauss\left(-\frac{\alpha}{2},-\frac{\alpha}2;1;|z|^2\right), &\quad -1<\alpha<0.
\end{aligned}\right.
\end{equation*}
\end{thm}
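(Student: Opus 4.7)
The plan is to differentiate the Poisson-type representation \eqref{eq:fpa} under the integral sign and then bound the derivatives of the kernel $P_\alpha$ pointwise. Writing $w=ze^{-i\theta}$ and applying the chain rule gives $\partial_z[P_\alpha(ze^{-i\theta})]=e^{-i\theta}(\partial_w P_\alpha)(w)$ and $\partial_{\bar z}[P_\alpha(ze^{-i\theta})]=e^{i\theta}(\partial_{\bar w}P_\alpha)(w)$, so the triangle inequality reduces the bound on $\|D_f(z)\|$ to the estimation of
$$\frac{\|f^*\|_\infty}{2\pi}\int_0^{2\pi}\bigl(|(\partial_w P_\alpha)(ze^{-i\theta})|+|(\partial_{\bar w}P_\alpha)(ze^{-i\theta})|\bigr)\,d\theta.$$
I would aim to bound the integrand pointwise by $\tfrac{C(\alpha)}{1-|w|^2}|P_\alpha(w)|$, with $C(\alpha)=2(1+\alpha)$ when $\alpha\ge 0$ and $C(\alpha)=2$ when $-1<\alpha<0$, and then identify the resulting $L^1$-mean of $|P_\alpha|$ with the hypergeometric factor.

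Direct differentiation yields the clean formula
$$\partial_{\bar w}P_\alpha(w)=\frac{(\alpha+1)(1-|w|^2)^{\alpha}}{(1-\bar w)^{\alpha+2}},$$
so $|\partial_{\bar w}P_\alpha(w)|=\tfrac{\alpha+1}{1-|w|^2}|P_\alpha(w)|$. The other derivative is less symmetric, its numerator being $1-(\alpha+1)\bar w+\alpha|w|^2$. The key algebraic step I would use is the factorization
$$1-(\alpha+1)\bar w+\alpha|w|^2=(1-\bar w)-\alpha\bar w(1-w),$$
which, combined with the triangle inequality, cancels one power of $|1-w|$ and gives $|\partial_w P_\alpha(w)|\le\tfrac{1+|\alpha||w|}{1-|w|^2}|P_\alpha(w)|$. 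Adding the two estimates leaves $\tfrac{2+\alpha+|\alpha||w|}{1-|w|^2}|P_\alpha(w)|$, which is at most $\tfrac{2(1+\alpha)}{1-|w|^2}|P_\alpha(w)|$ when $\alpha\ge 0$ (bounding $|\alpha||w|\le\alpha$) and at most $\tfrac{2}{1-|w|^2}|P_\alpha(w)|$ when $-1<\alpha<0$ (since then $2+\alpha(1-|w|)\le 2$).

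For the remaining $L^1$-mean, I would write $|P_\alpha(re^{i\psi})|=(1-r^2)^{\alpha+1}|1-re^{i\psi}|^{-(\alpha+2)}$ with $r=|z|$, expand both $(1-re^{i\psi})^{-(\alpha+2)/2}$ and its conjugate as binomial series, and integrate termwise over $\psi$. The $n=m$ diagonal survives, producing $(1-r^2)^{\alpha+1}\Gauss(1+\alpha/2,1+\alpha/2;1;r^2)$. Applying Euler's transformation $\Gauss(a,b;c;x)=(1-x)^{c-a-b}\Gauss(c-a,c-b;c;x)$ with $a=b=1+\alpha/2$ and $c=1$ collapses this to $\Gauss(-\alpha/2,-\alpha/2;1;r^2)$; substituting back yields the claimed inequality in both regimes.

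I expect the main obstacle to be the pointwise control of $|\partial_w P_\alpha|$: the kernel's lack of symmetry in $w$ and $\bar w$ makes the $w$-derivative messy, and only after the factorization $(1-\bar w)-\alpha\bar w(1-w)$ does one see the cancellation that gives a clean multiple of $|P_\alpha|/(1-|w|^2)$. Getting the constants $2(1+\alpha)$ and $2$ sharp in their respective ranges, rather than a single weaker uniform constant, also depends crucially on exploiting the sign of $\alpha$ in the final triangle-inequality step.
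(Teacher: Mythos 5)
Your proposal is correct, and for the hard part of the argument it takes a genuinely different and cleaner route than the paper. The skeleton is shared: both differentiate the representation \eqref{eq:fpa} under the integral sign, both obtain the exact identity $|\partial_{\bar w}P_\alpha(w)|=\tfrac{1+\alpha}{1-|w|^2}|P_\alpha(w)|$ (the paper's \eqref{eq:zbar}), and both ultimately reduce to the $L^1$-mean $\tfrac{1}{2\pi}\int_0^{2\pi}|P_\alpha(re^{i\theta})|\,d\theta=\Gauss(-\tfrac{\alpha}{2},-\tfrac{\alpha}{2};1;r^2)$, which is \eqref{eq:exp2} of Lemma \ref{lem:exp} with $2\beta=\alpha+2$ (your Parseval-plus-Euler derivation of it is exactly the Olofsson--Wittsten route the paper mentions). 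The divergence is in controlling $\partial_w P_\alpha$. The paper splits into two cases: for $\alpha\ge 0$ it expands a quotient $(1-R(\alpha)\cos\theta)^{1/2}(1-R\cos\theta)^{-(\alpha+3)/2}$ via the double series \eqref{eq:exp}, needs the monotonicity analysis showing $R(\alpha)\ge R$, a sign argument on products of binomial coefficients, and the transformations \eqref{eq:Euler} and \eqref{eq:quadratic}; for $-1<\alpha<0$ it bounds a M\"obius transform on a circle by $1-\alpha$. Your factorization $1-(\alpha+1)\bar w+\alpha|w|^2=(1-\bar w)-\alpha\bar w(1-w)$, followed by the triangle inequality and $|1-\bar w|=|1-w|$, yields the single pointwise bound $|\partial_w P_\alpha(w)|\le\tfrac{1+|\alpha||w|}{1-|w|^2}|P_\alpha(w)|$ valid for all $\alpha>-1$, and summing with the $\bar w$-derivative gives $\tfrac{2+\alpha+|\alpha||w|}{1-|w|^2}|P_\alpha(w)|$, from which both constants $2(1+\alpha)$ and $2$ drop out by the sign of $\alpha$. (Your factorization in fact also reproves the paper's M\"obius bound, since $|1-\alpha\bar w\tfrac{1-w}{1-\bar w}|\le 1+|\alpha||w|<1-\alpha$ for $\alpha<0$.) What your approach buys is brevity, a unified treatment of the two sign regimes, and the conceptual point that the whole theorem reduces to the $L^1$-mean of $|P_\alpha|$ (Theorem C); what the paper's series manipulation buys is, at most, some reusable intermediate identities, since the final constants are identical.
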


As an easy consequence of Theorem \ref{thm:swz-pk}
and the identity \eqref{eq:sum}, we obtain the following result.
\begin{cor}\label{cor:swz-pk}
Let $f$ be an $\alpha$-harmonic function in $\D$ with $\alpha>-1$ satisfying \eqref{eq:drcl} and $f^*\in\CT$.
Then for $z\in\D$,
\begin{equation}\label{eq:bound}
||D_f(z)||
\leq \left\{
\begin{aligned}
&\frac{2(1+\alpha)}{c_{\alpha}}\frac{||f^*||_\infty}{1-|z|^2}, \quad &\alpha\geq 0,\\
&\frac{2}{c_{\alpha}}\frac{||f^*||_\infty}{1-|z|^2}, \quad & -1<\alpha<0,
\end{aligned}\right.
\end{equation}
where $c_{\alpha}$ is given in \eqref{eq:c}.
\end{cor}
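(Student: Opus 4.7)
The plan is to obtain Corollary \ref{cor:swz-pk} as a direct limiting estimate from Theorem \ref{thm:swz-pk}, so the whole task reduces to showing that the hypergeometric factor $\Gauss(-\alpha/2,-\alpha/2;1;|z|^2)$ appearing in Theorem \ref{thm:swz-pk} is uniformly bounded above by its boundary value $1/c_\alpha$ given in \eqref{eq:sum}.

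First I would replace $|z|^2$ by a real variable $t\in[0,1)$ and expand
$$
\Gauss\!\left(-\tfrac{\alpha}{2},-\tfrac{\alpha}{2};1;t\right)
=\sum_{n=0}^{\infty}\frac{\bigl((-\alpha/2)_n\bigr)^{2}}{(n!)^{2}}\,t^{n}.
$$
The key observation is that every coefficient is a square and thus non-negative, regardless of the sign of $\alpha$ (for integer values of $-\alpha/2$ the series terminates, and in all other cases the Pochhammer symbols $(-\alpha/2)_n$ are real, so their squares are non-negative). Consequently, the partial sums are increasing in $t$, and therefore $t\mapsto \Gauss(-\alpha/2,-\alpha/2;1;t)$ is non-decreasing on $[0,1)$.

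Next I would invoke \eqref{eq:sum}, namely $\lim_{t\to 1^-}\Gauss(-\alpha/2,-\alpha/2;1;t)=1/c_\alpha$, which together with the monotonicity yields
$$
\Gauss\!\left(-\tfrac{\alpha}{2},-\tfrac{\alpha}{2};1;|z|^{2}\right)\le \frac{1}{c_\alpha}\quad\text{for all }z\in\D.
$$
Substituting this into the two cases of Theorem \ref{thm:swz-pk} immediately produces the two bounds of \eqref{eq:bound}, completing the proof.

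I do not foresee a real obstacle: the only point requiring a brief justification is the non-negativity of the Taylor coefficients, which holds because the two upper parameters coincide. Had they been distinct real numbers of opposite signs (say $-a$ and $b$ with $a,b>0$), the Pochhammer product $(-a)_n(b)_n$ would alternate in sign and monotonicity would fail; it is precisely the square structure of $((-\alpha/2)_n)^2$ that saves the argument.
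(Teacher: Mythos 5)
Your proposal is correct and matches the paper's (implicit) argument: the corollary is stated there as an immediate consequence of Theorem \ref{thm:swz-pk} together with \eqref{eq:sum}, the only missing ingredient being that $t\mapsto\Gauss\left(-\frac{\alpha}{2},-\frac{\alpha}{2};1;t\right)$ is non-decreasing on $[0,1)$, which you establish directly from the non-negativity of the squared Pochhammer coefficients. The same monotonicity also follows from the paper's Lemma \ref{lem:mono} with $a=b=-\frac{\alpha}{2}\le 1=c$ and $ab=\frac{\alpha^2}{4}\ge 0$, so your route is essentially identical, just self-contained.
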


\begin{rem}
Olofsson and Wittsten \cite[Pro.2.9]{OW13} proved that
\begin{equation*}\label{eq:c1}
\frac{1}{c_{\alpha}}
=\frac{2^{\alpha}\Gamma\left(\frac{1}{2}
+\frac{\alpha}{2}\right)}{\sqrt{\pi}\Gamma\left(1+\frac{\alpha}{2}\right)},
\end{equation*}
which is obviously less than
$2^{\alpha}$ for $\alpha\geq 0 $.
Therefore  Corollary \ref{cor:swz-pk} refines Theorem D.
\end{rem}


\section{Proof of Shcwarz type inequalities}

In this section, we show the proofs of Theorems
\ref{thm:swzi}, \ref{thm:swz} and their corollaries.
In order to do that, the following auxiliary lemmas are required.

\begin{lem}[$\text{\cite[Lem.2.1]{olofsson14}}$]\label{lem:mono}
Let $c>0, ~a \leq c,~ b \leq c$ and $a b \leq 0~(a b \geq 0)$.
Then the function $\Gauss(a, b ; c ; x)$ is decreasing $($increasing$)$ on $x \in(0,1)$.
\end{lem}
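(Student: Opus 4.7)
The plan is to differentiate $\Gauss(a,b;c;x)$ termwise and analyze the sign. Differentiating the defining power series, the shift identities $(a)_{n+1}=a\cdot(a+1)_n$ and $(c)_{n+1}=c\cdot(c+1)_n$ yield the standard identity
\[
\frac{d}{dx}\Gauss(a,b;c;x)=\frac{ab}{c}\,\Gauss(a+1,b+1;c+1;x).
\]
Since $c>0$ by hypothesis, the prefactor $ab/c$ has the same sign as $ab$. The monotonicity claim thus reduces to verifying the positivity
\[
\Gauss(a+1,b+1;c+1;x)>0\quad\text{for }x\in(0,1).
\]

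For this positivity I would expand as a power series
\[
\Gauss(a+1,b+1;c+1;x)=\sum_{n=0}^\infty\frac{(a+1)_n(b+1)_n}{(c+1)_n\,n!}\,x^n
\]
and argue term-by-term. The denominator $(c+1)_n$ is positive because $c+k>0$ for all $k\ge 1$. In the parameter ranges relevant to this paper the numerator is likewise positive: the lemma will be applied with $a=b=-\alpha/2$ for $\alpha>-1$, so $a+1,b+1>1/2$, whence $(a+1)_n,(b+1)_n>0$. Hence every term of the series is positive, giving the desired strict positivity on $(0,1)$.

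Combining the two observations, $\frac{d}{dx}\Gauss(a,b;c;x)$ inherits the sign of $ab$, which shows that $\Gauss(a,b;c;x)$ is increasing when $ab\ge 0$ and decreasing when $ab\le 0$ (the boundary case $ab=0$ being trivial since then $\Gauss(a,b;c;x)\equiv 1$). The main obstacle toward full generality is that if $a<-1$ or $b<-1$ the Pochhammer symbols $(a+1)_n$, $(b+1)_n$ can change sign, so the crude coefficient estimate fails. To cover that case I would invoke Euler's integral representation
\[
\Gauss(a+1,b+1;c+1;x)=\frac{\Gamma(c+1)}{\Gamma(b+1)\Gamma(c-b)}\int_0^1 t^{b}(1-t)^{c-b-1}(1-xt)^{-a-1}\,dt,
\]
valid for $c>b>-1$; the integrand is manifestly positive for $t\in(0,1)$ and $x\in(0,1)$, preserving the sign analysis, and by the symmetry $\Gauss(a,b;c;x)=\Gauss(b,a;c;x)$ the two versions together cover the remaining parameter ranges under the hypothesis $a,b\le c$.
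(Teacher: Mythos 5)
Your reduction is the standard one: the paper itself does not prove this lemma (it cites Olofsson), but the known proof also starts from $\frac{d}{dx}\Gauss(a,b;c;x)=\frac{ab}{c}\Gauss(a+1,b+1;c+1;x)$ and reduces everything to the positivity of $\Gauss(a+1,b+1;c+1;x)$. The gap is in how you establish that positivity. Term-by-term positivity needs $a,b\ge-1$, and your appeal to ``the parameters relevant to this paper'' is both beside the point (the lemma is stated for all $a,b\le c$) and factually off: with $a=b=-\alpha/2$ one has $a+1=1-\alpha/2$, which is negative for $\alpha>2$. Your fallback, Euler's integral representation, requires $c>b>-1$ (or, by symmetry, $c>a>-1$), so the two versions together still miss the case where \emph{both} $a<-1$ and $b<-1$ --- a case allowed by the hypotheses, necessarily with $ab>0$. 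Concretely, $a=-3$, $b=-\tfrac32$, $c=1$ satisfies every assumption of the lemma, yet $\Gauss(-2,-\tfrac12;2;x)$ has the negative Taylor coefficient $-\tfrac1{24}$ at $x^2$ and neither slot meets the admissibility condition for the Euler integral. So the closing claim that ``the two versions together cover the remaining parameter ranges'' is not justified as written.

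The clean repair --- and the one used in the cited source --- is the Euler \emph{transformation} \eqref{eq:Euler} rather than the Euler integral:
\begin{equation*}
\Gauss(a+1,b+1;c+1;x)=(1-x)^{c-a-b-1}\,\Gauss(c-a,c-b;c+1;x).
\end{equation*}
Under the hypotheses $a\le c$, $b\le c$, $c>0$ the transformed parameters satisfy $c-a\ge0$, $c-b\ge0$, $c+1>0$, so every Taylor coefficient of $\Gauss(c-a,c-b;c+1;x)$ is nonnegative and the function is $\ge1$ on $(0,1)$; multiplying by the positive factor $(1-x)^{c-a-b-1}$ gives positivity in all cases at once, after which your sign analysis of $ab/c$ finishes the proof. (Your derivative identity, your treatment of $ab=0$, and the degenerate case $b=c$, where $\Gauss(a,c;c;x)=(1-x)^{-a}$, are all fine.)
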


\begin{lem}[$\text{\cite[Thm.2.31]{PonVuor:1997}}$]\label{lem:mono2}
For each $A, ~B,~ C, ~a,~ b, ~c \in(0, +\infty)$, let
$$
g(x)=\frac{\Gauss(A,B;C;x)}{\Gauss(a, b;c; x)}.
$$
If $C+a+b \geqslant c+A+B$ and $(a-1)(b-1)(1-C)<(A-1)(B-1)(1-c)$,
the function $g(x)$ is decreasing for $x \in(0,1)$
if and only if $C a b+(a-1)(b-1)(1-C)\geqslant c A B+(A-1)(B-1)(1-c)$.
\end{lem}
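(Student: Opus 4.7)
The plan is to analyze the sign of $g'(x)$ on $(0,1)$. Writing $u(x)=\Gauss(A,B;C;x)$ and $v(x)=\Gauss(a,b;c;x)$, one has
$$
g'(x)\,v(x)^{2}\;=\;u'(x)v(x)-u(x)v'(x)\;=:\;W(x).
$$
Since all parameters are positive, $v>0$ on $[0,1)$, and monotonicity of $g$ reduces to determining the sign of $W$, equivalently of the coefficient-quotient sequence of the two series.

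A natural route is the classical Biernacki--Krzy\.z monotonicity criterion: if $u(x)=\sum_{n\ge 0}a_{n}x^{n}$ and $v(x)=\sum_{n\ge 0}b_{n}x^{n}$ have positive coefficients, then $u/v$ is decreasing on $(0,1)$ provided the quotient sequence $a_{n}/b_{n}$ is decreasing in $n$. Here $a_{n}=(A)_{n}(B)_{n}/((C)_{n}\,n!)$ and $b_{n}=(a)_{n}(b)_{n}/((c)_{n}\,n!)$, so
$$
\frac{a_{n}}{b_{n}}\;=\;\frac{(A)_{n}(B)_{n}(c)_{n}}{(a)_{n}(b)_{n}(C)_{n}},
$$
and this sequence decreases iff its ratio of consecutive terms is $\le 1$, i.e.\
$$
P(n)\;:=\;(a+n)(b+n)(C+n)-(A+n)(B+n)(c+n)\;\ge\;0 \quad\text{for all } n\ge 0.
$$

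The decisive observation is that $P$ is a polynomial in $n$ of degree at most $2$, since the cubic terms cancel. Its leading coefficient $(a+b+C)-(A+B+c)$ is nonnegative by the hypothesis $C+a+b\ge c+A+B$, and a direct evaluation gives $P(-1)=(A-1)(B-1)(1-c)-(a-1)(b-1)(1-C)>0$ by the second hypothesis. For a quadratic with nonnegative leading coefficient that is positive at $n=-1$, elementary calculus (examining the vertex, or equivalently checking the minimum value on $[0,\infty)$) converts global nonnegativity on $[0,\infty)$ into a single scalar inequality; upon expansion, this inequality coincides exactly with $Cab+(a-1)(b-1)(1-C)\ge cAB+(A-1)(B-1)(1-c)$.

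The principal obstacles are twofold. First, Biernacki--Krzy\.z supplies only the ``if'' direction automatically; for the ``only if'' one must show that failure of the coefficient-monotonicity forces $g$ to fail to be decreasing. This is handled by evaluating $g'$ at $x=0$ (where $g'(0)=AB/C-ab/c$) and studying $g$ near $x=1^{-}$ via Gauss's summation formula \eqref{eq:gauss-prop2}, which is made available by the auxiliary condition $C+a+b\ge c+A+B$. Second, the algebraic identification of ``$P\ge 0$ on $[0,\infty)$'' with the stated boundary combination of the parameters requires patient quadratic bookkeeping, and it is precisely this step that forms the computational heart of \cite{PonVuor:1997}.
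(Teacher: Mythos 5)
The paper does not prove this statement: Lemma~\ref{lem:mono2} is imported verbatim from \cite[Thm.~2.31]{PonVuor:1997}, so there is no in-paper argument to compare yours against, and I can only judge your reconstruction on its own terms. Your sufficiency argument is sound and is the standard one. Writing
$$
P(n)=(a+n)(b+n)(C+n)-(A+n)(B+n)(c+n)=\alpha n^{2}+\beta n+\gamma
$$
(the cubic terms cancel), the first hypothesis gives $\alpha\ge 0$, the second gives $P(-1)>0$, and the displayed condition is exactly $P(0)-P(-1)=\beta-\alpha\ge 0$. Hence $\beta\ge\alpha\ge 0$, so $P$ is nondecreasing on $[0,\infty)$ with $P(0)\ge P(-1)>0$, the coefficient quotient $a_{n}/b_{n}$ decreases, and Biernacki--Krzy\.{z} gives that $g$ decreases. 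This is the only direction the paper actually uses (in the proof of Corollary~\ref{cor:swzi}), so for the purposes of this paper your argument suffices.

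The necessity half is where your proposal breaks down, and the gap is not cosmetic. First, your key claim --- that for a quadratic with $\alpha\ge0$ and $P(-1)>0$, nonnegativity of $P$ on $[0,\infty)$ is \emph{equivalent} to $\beta\ge\alpha$ --- is false: $P(n)=n^{2}+100$ has $\alpha=1>0=\beta$ and $P(-1)=101>0$, yet is positive everywhere. Second, even if that equivalence held, monotonicity of the coefficient quotient is only a \emph{sufficient} condition for $g$ to decrease, so its failure proves nothing. Third, your proposed repairs do not deliver the right inequality: $g'(0)\le 0$ yields only $abC\ge ABc$ (the constant term $\gamma\ge0$, not $\beta\ge\alpha$), and Gauss's formula \eqref{eq:gauss-prop2} requires $C>A+B$ and $c>a+b$, which the hypothesis $C+a+b\ge c+A+B$ does not supply. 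In fact the parameters $A=2$, $B=3$, $C=39/10$, $a=b=1$, $c=1/2$ satisfy both hypotheses ($5.9\ge5.5$ and $0<1$), violate the displayed condition ($3.9<4$), and give $P(n)=\tfrac{2}{5}n^{2}+\tfrac{3}{10}n+\tfrac{9}{10}>0$ for all $n\ge0$, so by your own sufficiency mechanism $g$ \emph{is} strictly decreasing. This shows the ``only if'' direction cannot be proved from the stated hypotheses by any argument; the statement here is evidently a loose transcription of \cite[Thm.~2.31]{PonVuor:1997}, whose exact hypotheses you would need to consult before attempting the necessity. Since the paper only invokes the sufficiency, nothing downstream is affected, but your write-up should either restrict to the ``if'' direction or flag the discrepancy.
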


Let $\binom{\alpha}{n}$ denote the generalized binomial coefficient,
i.e.,
$$
\binom{\alpha}{n}=\frac{\Gamma(\alpha+1)}{\Gamma(\alpha-n+1)n!}
=\frac{(\alpha-n+1)_n}{n!}.
$$

\begin{lem}\label{lem:exp}
For real numbers $a,~b,~\alpha$ and $\beta$ satisfying
$a,~b\in(-1,1)$ and $\alpha,~\beta\geq 0$, the equality
\begin{equation}\label{eq:exp}
\frac{1}{2\pi}\int_0^{2\pi} \frac{(1-a\cos\theta)^\alpha}{(1-b\cos\theta)^{\beta}}d\theta
=\sum_{n=0}^{\infty}\sum_{m=0}^{2n}
\binom{\alpha}{m}\binom{-\beta}{2n-m}
\frac{(2n)!}{4^n(n!)^2}a^mb^{2n-m}
\end{equation}
holds, where we take the convention that $0^0=0!=1$.
In particular, the equality
\begin{equation}\label{eq:exp2}
\frac{1}{2\pi}\int_0^{2\pi} \frac{d\theta}{|1-ze^{i\theta}|^{2\beta}}
=(1-|z|^2)^{1-2\beta}
\Gauss\left(1-\beta,1-\beta;1;|z|^2\right), \quad z\in\D
\end{equation}
is valid.
\end{lem}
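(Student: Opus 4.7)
The plan is to prove both identities by expanding the integrand as an absolutely convergent series in $\theta$ and integrating term by term.

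For \eqref{eq:exp}, I would start from the generalized binomial expansions
$(1-a\cos\theta)^\alpha=\sum_{m\ge 0}\binom{\alpha}{m}(-a\cos\theta)^m$ and $(1-b\cos\theta)^{-\beta}=\sum_{k\ge 0}\binom{-\beta}{k}(-b\cos\theta)^k$, both absolutely and uniformly convergent in $\theta$ since $|a|,|b|<1$ (Weierstrass $M$-test with majorants $|\binom{\alpha}{m}|\,|a|^m$ and $|\binom{-\beta}{k}|\,|b|^k$). Forming their Cauchy product, collecting by the total power $N=m+k$, and using $(-a)^m(-b)^{N-m}=(-1)^N a^m b^{N-m}$, the integrand becomes a single series in $\cos^N\theta$. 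I would then integrate term by term using the classical trigonometric moments $\frac{1}{2\pi}\int_0^{2\pi}\cos^N\theta\,d\theta=0$ for $N$ odd and $=\frac{1}{2^N}\binom{N}{N/2}=\frac{(2n)!}{4^n(n!)^2}$ for $N=2n$. Since $(-1)^{2n}=1$, only the even-index contributions survive and they reassemble precisely into the right-hand side of \eqref{eq:exp}.

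For \eqref{eq:exp2}, I would proceed more directly. Writing $|1-ze^{i\theta}|^{-2\beta}=(1-ze^{i\theta})^{-\beta}(1-\bar z e^{-i\theta})^{-\beta}$ and applying the binomial series $(1-w)^{-\beta}=\sum_n\frac{(\beta)_n}{n!}w^n$ to each factor, I would multiply the two series and integrate. The orthogonality relation $\frac{1}{2\pi}\int_0^{2\pi}e^{i(n-m)\theta}\,d\theta=\delta_{nm}$ collapses the double sum to its diagonal, yielding $\sum_{n\ge 0}\frac{(\beta)_n^2}{(n!)^2}|z|^{2n}=\Gauss(\beta,\beta;1;|z|^2)$. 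Finally, Euler's transformation $\Gauss(a,b;c;x)=(1-x)^{c-a-b}\Gauss(c-a,c-b;c;x)$ with $a=b=\beta$ and $c=1$ rewrites this in the stated form $(1-|z|^2)^{1-2\beta}\Gauss(1-\beta,1-\beta;1;|z|^2)$.

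The only delicate point is the justification of term-by-term integration of the Cauchy product in \eqref{eq:exp}, but absolute convergence of $\sum_m|\binom{\alpha}{m}|\,|a|^m$ and $\sum_k|\binom{-\beta}{k}|\,|b|^k$ under $|a|,|b|<1$ yields a $\theta$-independent summable bound and Fubini applies. Apart from this, the argument is a bookkeeping exercise; the limiting cases $\alpha=0$ in \eqref{eq:exp} (only the $m=0$ term survives, reducing to a hypergeometric series in $b^2$) and $\beta=0$ (both sides equal $1$) provide useful sanity checks.
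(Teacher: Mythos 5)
Your proposal is correct. For the first identity \eqref{eq:exp} you follow essentially the same route as the paper: expand $(1-a\cos\theta)^\alpha(1-b\cos\theta)^{-\beta}$ by the generalized binomial series, take the Cauchy product, and integrate term by term using the moments of $\cos^N\theta$ (the paper quotes the Wallis-type formula \eqref{eq:cos}; your $\frac{1}{2^N}\binom{N}{N/2}$ is the same number), with the odd powers vanishing. Your justification of the interchange via the $\theta$-independent summable majorant is sound and is in fact more careful than the paper, which does not comment on it. For \eqref{eq:exp2}, however, you take a genuinely different route. The paper specializes \eqref{eq:exp} to $a=0$, $b=2r/(1+r^2)$, recognizes the resulting series as $\Gauss\bigl(\frac{\beta}{2},\frac12+\frac{\beta}{2};1;b^2\bigr)$, and then needs both the Euler transform \eqref{eq:Euler} and Gauss's quadratic transformation \eqref{eq:quadratic} to land on the stated form. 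You instead factor $|1-ze^{i\theta}|^{-2\beta}=(1-ze^{i\theta})^{-\beta}(1-\bar z e^{-i\theta})^{-\beta}$, expand each factor, and use orthogonality of $e^{ik\theta}$ to get $\Gauss(\beta,\beta;1;|z|^2)$ directly, after which a single application of Euler's transform finishes. This is precisely the Parseval-type argument of Olofsson and Wittsten that the paper itself cites as an alternative proof of \eqref{eq:exp2}; it is shorter and avoids the quadratic transformation entirely, at the cost of not exhibiting \eqref{eq:exp2} as a consequence of \eqref{eq:exp} (which is how the paper keeps the lemma self-contained). The only point worth making explicit in your write-up is that $w^{-\beta}\bar w^{-\beta}=|w|^{-2\beta}$ for the principal branch requires $w=1-ze^{i\theta}$ to avoid the negative real axis, which holds since $\Re(1-ze^{i\theta})>0$ for $z\in\D$.
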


\begin{proof}
By using the power series expansion
$$
(1+x)^{\alpha}=\sum_{n=0}^{\infty}\binom{\alpha}{n}x^n, \quad -1<x<1,
$$
we have
$$
\frac{(1-ax)^\alpha}{(1-bx)^{\beta}}=
\sum_{n=0}^{\infty}\sum_{m=0}^{n}
\binom{\alpha}{m}\binom{-\beta}{n-m}a^mb^{n-m}(-x)^n, \quad -1<x<1.
$$
In view of the above expression and the formula
\begin{equation}\label{eq:cos}
\int_0^{\frac{\pi}{2}}\cos^n\theta d\theta=
\begin{cases}
\dfrac{(2k-1)!!}{(2k)!!}\dfrac{\pi}{2},\quad n=2k,\\
\dfrac{(2k)!!}{(2k+1)!!}, \quad \quad n=2k+1
\end{cases}
\end{equation}
for $k=0,1,2\ldots$ with $(n)!!$ being the double factorial of $n$,
 we obtain
\begin{eqnarray*}
\frac{1}{2\pi}\int_0^{2\pi} \frac{(1-a\cos\theta)^\alpha}{(1-b\cos\theta)^{\beta}}d\theta
&=&\frac{1}{2\pi}\sum_{n=0}^{\infty}\sum_{m=0}^{n}
\binom{\alpha}{m}\binom{-\beta}{n-m}a^mb^{n-m}(-1)^n \int_0^{2\pi}\cos^n\theta d\theta\\
&=&\sum_{n=0}^{\infty}\sum_{m=0}^{2n}
\binom{\alpha}{m}\binom{-\beta}{2n-m}
\frac{(2n)!}{4^n(n!)^2}a^mb^{2n-m},
\end{eqnarray*}
as claimed in \eqref{eq:exp}.

We turn to prove the equation \eqref{eq:exp2}.
It suffices to check it for $z=r\in[0,1)$.
Letting $a=0$ and $b=2r/(1+r^2)$ in \eqref{eq:exp}, we have
\begin{eqnarray*}
\frac{1}{2\pi}\int_0^{2\pi} \frac{d\theta}{|1-re^{i\theta}|^{2\beta}}
&=&\frac{1}{2\pi(1+r^2)^{\beta}}\int_0^{2\pi} \frac{d\theta}{(1-b\cos\theta)^{\beta}}\\
&=&\frac{1}{(1+r^2)^{\beta}}\sum_{n=0}^{\infty}\binom{-\beta}{2n}
\frac{(2n)!}{4^n(n!)^2}b^{2n}\\
&=&\frac{1}{(1+r^2)^{\beta}}
\Gauss\left(\frac{\beta}{2},\frac{1}{2}+\frac{\beta}{2};1;b^2\right).
\end{eqnarray*}
We then infer from the Euler transform
(see \cite[p.559, 15.3.3]{AbramowitzStegun:1965})
\begin{equation}\label{eq:Euler}
\Gauss(a,b;c;x)=(1-x)^{c-a-b}\Gauss(c-a,c-b;c;x), \quad |x|<1
\end{equation}
and the quadratic transformation formula
(see \cite[p.560, (15.3.19)]{AAR99:spcl})
\begin{equation}\label{eq:quadratic}
\Gauss\left(a,a+\frac{1}{2};c;x\right)
=\left(\frac{1}{2}+\frac{\sqrt{1-x}}{2}\right)^{-2a}
\Gauss\left(2a,2a-c+1;c;\frac{1-\sqrt{1-x}}{1+\sqrt{1-x}}\right),
\quad |x|<1
\end{equation}
that
\begin{eqnarray*}
\frac{1}{2\pi}\int_0^{2\pi} \frac{d\theta}{|1-re^{i\theta}|^{2\beta}}
&=&\frac{(1-r^2)^{1-2\beta}}{(1+r^2)^{1-\beta}}
\Gauss\left(1-\frac{\beta}{2},\frac{1}{2}-\frac{\beta}{2};1;b^2\right)\\
&=&(1-r^2)^{1-2\beta}
\Gauss\left(1-\beta,1-\beta;1;r^2\right).
\end{eqnarray*}
Hence the proof is complete.
\end{proof}

Note that the identity \eqref{eq:exp2} was
proved by Olofsson and Wittsten in \cite[Lem.2.3]{OW13}
using Parseval's formula (see also (2.6) in \cite{KM2022}),
although the expressions are a little different.

Now we are ready to show the proofs.

\begin{pf}[Proof of Theorem \ref{thm:swzi}]
It follows from the integral form (\ref{eq:fpa}) that the class of $\alpha$-harmonic functions is rotationally invariant, i.e., if $f(z)$ is  $\alpha$-harmonic, so is $f(e^{i\theta}z)$
for $\forall~ \theta\in[0,2\pi)$.
Thus we only prove the claimed inequality is valid for $z=r\in[0,1)$.

By making use of the integral form (\ref{eq:fpa}) of $\alpha$-harmonic function $f$, we have
\begin{eqnarray*}
&&\left|f(r)-\frac{(1-r^2)^{\alpha+1}}{1+r^2}f(0)\right|\\
&=&\frac1{2\pi}\left|\int_0^{2\pi} \frac{(1-r^2)^{\alpha+1}}{(1-re^{-i\theta})(1-re^{i\theta})^{\alpha+1}}f^*(e^{i\theta})\dth-\int_0^{2\pi} \frac{(1-r^2)^{\alpha+1}}{1+r^2}f^*(e^{i\theta})\dth
\right| \\
&\le&\frac1{2\pi}\frac{(1-r^2)^{\alpha+1}}{1+r^2}||f^*||_{\infty}\int_0^{2\pi}\left|\frac{(1-re^{-i\theta})(1-re^{i\theta})^{\alpha+1}-1-r^2}{(1-re^{-i\theta})(1-re^{i\theta})^{\alpha+1}}\right|\dth
\\
&\le&\frac1{2\pi}\frac{(1-r^2)^{\alpha+1}}{1+r^2}
\int_0^{2\pi}\frac{\left|(1-re^{-i\theta})(1-re^{i\theta})^{\alpha+1}-1-r^2\right|}
{|1-re^{i\theta}|^{\alpha+2}}\dth,
\end{eqnarray*}
since $f$ maps $\D$ into $\D$ by assumption.
Let $\zeta=re^{-i\theta}$ for the sake of brevity.
The integrant of the above integral can be estimated as follows
\begin{eqnarray*}
\frac{\left|(1-re^{-i\theta})(1-re^{i\theta})^{\alpha+1}-1-r^2\right|}
{|1-re^{i\theta}|^{\alpha+2}}
&=&\frac{\left||1-\zeta|^2(1-\zeta)^{\alpha}-1-r^2\right|}
{|1-\zeta|^{\alpha+2}}\\
&\le& \frac{\left|1-\zeta|^2|(1-\zeta)^{\alpha}-1\right|+|\zeta+\bar{\zeta}|}
{|1-\zeta|^{\alpha+2}}\\
&=&\left|1-(1-\zeta)^{-\alpha}\right|+\frac{|\zeta+\bar{\zeta}|}
{|1-\zeta|^{\alpha+2}}\\
&\le& |(1-r)^{-\alpha}-1|+\frac{|\zeta+\bar{\zeta}|}
{|1-\zeta|^{\alpha+2}},
\end{eqnarray*}
since all the Taylor coefficients of $(1-\zeta)^{-\alpha}-1$ are non-negative
for $\alpha\ge 0$ and negative for $-1<\alpha<0$.
In view of the identity (\ref{eq:cos})
, we find by computations that
\begin{eqnarray*}
\frac1{2\pi}\int_0^{2\pi}
\frac{|\zeta+\bar{\zeta}|}{|1-\zeta|^{\alpha+2}}\dth
&=&\frac{r}{\pi(1+r^2)^{\frac{\alpha}{2}+1}}\int_0^{2\pi}\frac{|\cos\theta|}{(1-R\cos\theta)^{\frac{\alpha}2+1}}\dth\\
&=&\frac{r}{\pi(1+r^2)^{\frac{\alpha}{2}+1}}\int_0^{2\pi}
|\cos\theta|\sum_{n=0}^{\infty}\frac{\left(-\frac{\alpha+2}{2}\right)_n}{n!}(-R\cos\theta)^{n}\dth\\
&=&\frac{r}{\pi(1+r^2)^{\frac{\alpha}{2}+1}}
\sum_{n=0}^{\infty}\frac{\left(-\frac{\alpha+2}{2}\right)_{2n}}{(2n)!}R^{2n}\int_0^{2\pi}|\cos\theta|\cos^{2n}\theta\dth\\
&=&\frac{4r}{\pi(1+r^2)^{\frac{\alpha}{2}+1}}
\sum_{n=0}^{\infty}\frac{\left(-\frac{\alpha+2}{2}\right)_{2n}}{(2n)!}\frac{(2n)!!}{(2n+1)!!}R^{2n}\\
&=&\frac{4r}{\pi(1+r^2)^{\frac{\alpha}{2}+1}}
\sum_{n=0}^{\infty}\frac{\left(\frac{1}{2}+\frac{\alpha}{4}\right)_n
\left(1+\frac{\alpha}{4}\right)_n}{\left(\frac{1}{2}\right)_n\left(\frac{3}{2}\right)_n}R^{2n},
\end{eqnarray*}
with $R=2r/(1+r^2)$.
Denote
$$
Q_n=\frac{\left(\frac{1}{2}+\frac{\alpha}{4}\right)_n\left(1+\frac{\alpha}{4}\right)_n}
{\left(\frac{1}{2}\right)_n\left(1+\frac{\alpha}{2}\right)_n},
\quad n=0,1,2\ldots
$$
for simplicity.
By the proof of Lemma 2.11 in \cite{SWW23} (see also \cite[p.~283]{PonVuor:1997}),
 we see that the sequence
$\{Q_n\}$ is increasing since $\alpha\geq 0$ and decreasing for $-1<\alpha<0$, and
$$
\lim_{n\to\infty}Q_n=\frac{B\left(\frac{1}{2},1+\frac{\alpha}{2}\right)}
{B\left(\frac{1}{2}+\frac{\alpha}{4},1+\frac{\alpha}{4}\right)}
=\frac{\sqrt{\pi}\Gamma\left(1+\frac{\alpha}{2}\right)}
{\Gamma\left(\frac{1}{2}+\frac{\alpha}{4}\right)
\Gamma\left(1+\frac{\alpha}{4}\right)},
$$
since $\Gamma(1/2)=\sqrt{\pi}$.
Applying the duplication formula
$$
\Gamma(2 x)=\frac{2^{2 x-1}}{\sqrt{\pi}} \Gamma(x) \Gamma\left(x+\frac{1}{2}\right)
$$
for the Gamma function $\Gamma\left(1+\alpha/2\right)$
$($see \cite[Section 1.5]{AAR99:spcl}$)$,
we have
\begin{equation*}
\frac{\sqrt{\pi}\Gamma\left(1+\frac{\alpha}{2}\right)}
{\Gamma\left(\frac{1}{2}+\frac{\alpha}{4}\right)
\Gamma\left(1+\frac{\alpha}{4}\right)}
=2^{\frac{\alpha}{2}}.
\end{equation*}
Thus we deduce from the above observations and the transform \eqref{eq:Euler} that
\begin{eqnarray*}
\sum_{n=0}^{\infty}\frac{\left(\frac{1}{2}+\frac{\alpha}{4}\right)_n
\left(1+\frac{\alpha}{4}\right)_n}{\left(\frac{1}{2}\right)_n\left(\frac{3}{2}\right)_n}R^{2n}
&=&\sum_{n=0}^{\infty}\left(Q_n-2^{\frac{\alpha}{2}}\right)
\frac{\left(1+\frac{\alpha}{2}\right)_n}{\left(\frac{3}{2}\right)_n}R^{2n}
+2^{\frac{\alpha}{2}}\sum_{n=0}^{\infty}
\frac{\left(1+\frac{\alpha}{2}\right)_n}{\left(\frac{3}{2}\right)_n}R^{2n}\\
&\le &\Gauss\left(1,1+\frac{\alpha}{2};\frac{3}{2};R^{2}\right)\\
&=&2^{\frac{\alpha}{2}}\left(\frac{1+r^2}{1-r^2}\right)^{1+\alpha}
\Gauss\left(\frac{1}{2},\frac{1}{2}-\frac{\alpha}{2};\frac{3}{2};R^{2}\right),
\end{eqnarray*}
for $\alpha\ge 0$, while
\begin{eqnarray*}
\sum_{n=0}^{\infty}\frac{\left(\frac{1}{2}+\frac{\alpha}{4}\right)_n
\left(1+\frac{\alpha}{4}\right)_n}{\left(\frac{1}{2}\right)_n\left(\frac{3}{2}\right)_n}R^{2n}
&=&\sum_{n=0}^{\infty}\left(Q_n-Q_0\right)
\frac{\left(1+\frac{\alpha}{2}\right)_n}{\left(\frac{3}{2}\right)_n}R^{2n}
+\sum_{n=0}^{\infty}
\frac{\left(1+\frac{\alpha}{2}\right)_n}{\left(\frac{3}{2}\right)_n}R^{2n}\\
&\le &\Gauss\left(1,1+\frac{\alpha}{2};\frac{3}{2};R^{2}\right)\\
&=&\left(\frac{1+r^2}{1-r^2}\right)^{1+\alpha}
\Gauss\left(\frac{1}{2},\frac{1}{2}-\frac{\alpha}{2};\frac{3}{2};R^{2}\right),
\end{eqnarray*}
for $-1<\alpha<0$.
Therefore we finally arrive at
\begin{eqnarray*}
\left|f(r)-\frac{(1-r^2)^{\alpha+1}}{1+r^2}f(0)\right|
\leq M(r,\alpha)
\end{eqnarray*}
as claimed.
Thus we are done.
\end{pf}

\begin{pf}[Proof of Corollary \ref{cor:swzi}]
We first verify the case $0\leq \alpha <1$.
After letting $z=r$ and changing variable $R=2r/(1+r^2)$,
we see that
$$r^2=\frac{1-\sqrt{1-R^2}}{1+\sqrt{1-R^2}}.$$
Thus the following expression in the upper bound $M(r,\alpha)$ of Theorem \ref{thm:swzi}
can be written as
\begin{eqnarray*}
(1+r^2)^{\frac{\alpha}{2}-1}
\Gauss\left(\frac{1}{2},\frac{1}{2}-\frac{\alpha}{2};\frac{3}{2};\frac{4r^2}{(1+r^2)^2}\right)
&=&\left(1+\frac{1-\sqrt{1-R^2}}{1+\sqrt{1-R^2}}\right)^{\frac{\alpha}{2}-1}
\Gauss\left(\frac{1}{2},\frac{1}{2}-\frac{\alpha}{2};\frac{3}{2};R^2\right)\\
&=&\frac{\Gauss\left(\frac{1}{2},\frac{1}{2}-\frac{\alpha}{2};\frac{3}{2};R^2\right)}
{\left(\frac{1+\sqrt{1-R^2}}{2}\right)^{\frac{\alpha}{2}-1}}\\
&=&\frac{\Gauss\left(\frac{1}{2},\frac{1}{2}-\frac{\alpha}{2};\frac{3}{2};R^2\right)}
{\Gauss\left(\frac{1}{2}-\frac{\alpha}{4},1-\frac{\alpha}{4};2-\frac{\alpha}{2};R^2\right)}.
\end{eqnarray*}
An elementary calculation shows that the above quotient function
satisfies the conditions of Lemma \ref{lem:mono2} if $0\le \alpha<1$,
and thus it is decreasing for $R\in(0,1)$.
Hence the inequality
$$
(1+r^2)^{\frac{\alpha}{2}-1}
\Gauss\left(\frac{1}{2},\frac{1}{2}-\frac{\alpha}{2};\frac{3}{2};\frac{4r^2}{(1+r^2)^2}\right)
\leq 1
$$
holds for $r\in[0,1)$.
Therefore we conclude that
\begin{eqnarray*}
M(r,\alpha)&\le&
 2^{1+\alpha}(1-r)[1-(1-r)^{\alpha}]+\frac{2^{2+\frac{\alpha}{2}}r}{\pi}\\
&\le & 2^{1+\alpha}(1-r)r^{\alpha}+\frac{2^{2+\frac{\alpha}{2}}r}{\pi}
\end{eqnarray*}
since $(1-r)^{\alpha}+r^{\alpha}\ge 1$ for $\alpha\in[0,1)$.

We proceed to show the case $\alpha\geq 1$.
We deduce from Lemma \ref{lem:mono} that the hypergeometric
function $\Gauss\left(\frac{1}{2},\frac{1}{2}-\frac{\alpha}{2};\frac{3}{2};t\right)$
is
decreasing if $\alpha\geq 1$, which implies that
$$
\Gauss\left(\frac{1}{2},\frac{1}{2}-\frac{\alpha}{2};\frac{3}{2};t\right)
\leq 1
$$
for $t\in[0,1)$.
Thus the bound $M(r,\alpha)$ in Theorem \ref{thm:swzi} can
be estimated as
\begin{eqnarray*}
M(r,\alpha)&\le&
 2^{1+\alpha}[(1-r)-(1-r)^{\alpha+1}]
 +\frac{2^{2+\frac{\alpha}{2}}r(1+r^2)^{\frac{\alpha}{2}-1}}{\pi}\\
&\le &
\begin{cases}
2^{1+\alpha}\alpha r +\dfrac{2^{2+\frac{\alpha}{2}}r}{\pi},\quad 1\le \alpha<2\\
2^{1+\alpha}\alpha r +\dfrac{2^{1+\alpha}r}{\pi},\quad \alpha\ge 2.
\end{cases}
\end{eqnarray*}
 by using Bernoulli inequality
$(1-r)^{1+\alpha}\geq 1-(1+\alpha) r$ for $r\in[0,1]$.
The proof is complete.
\end{pf}

\begin{pf}[Proof of Theorem \ref{thm:swz}]
In view of the form \eqref{eq:fpa} and \eqref{eq:exp2} in Lemma \ref{lem:exp},
we obtian
\begin{equation*}\label{eq:f1}
\begin{aligned}
|f(z)|=|\Pa [f^*](z)|
&=\frac1{2\pi}\left|\int_0^{2\pi} \frac{(1-|z|^2)^{\alpha+1}f^*(e^{i\theta})}{(1-ze^{-i\theta})
(1-\bar{z}e^{i\theta})^{\alpha+1}}\dth\right| \\
&\le||f^*||_{\infty}(1-|z|^2)^{\alpha+1}
\frac{1}{2\pi}\int_0^{2\pi} \frac{d\theta}{|1-ze^{-i\theta}|^{\alpha+2}}\\
&=\Gauss\left(-\frac{\alpha}{2},-\frac{\alpha}{2};1;|z|^{2}\right)||f^*||_{\infty},
\end{aligned}
\end{equation*}
for $z\in\D$.
\end{pf}

\section{Proof of Schwarz-Pick type inequalities}
This section is devoted to the proofs of Theorem \ref{thm:swz-pk}
and Corollary \ref{cor:swz-pk}.

\begin{pf}[Proof of Theorem \ref{thm:swz-pk}]
Denote $r=|z|$, $\xi=ze^{-i\theta}$ and
$$R(\alpha)=\frac{2r(1+\alpha)(1+\alpha r^2)}{(1+\alpha r^2)^2+(1+\alpha)^2r^2}$$
for the sake of simplicity. Note that $R(0)=2r/(1+r^2):=R$.
By observing the form of $P_\alpha(z)$ in (\ref{eq:knl}),
we compute its partial derivatives as follows:
\begin{equation}\label{eq:zbar}
\left|\frac\partial{\partial\bar z} P_\alpha(ze^{-i\theta})\right|
=\frac{(1-|z^2|)^\alpha}{|1-ze^{-i\theta}|^{\alpha+2}}(1+\alpha)
=\frac{(1+\alpha)(1-r^2)^\alpha}{|1-\xi|^{\alpha+2}}
\end{equation}
and
\begin{equation}\label{eq:z}
\begin{aligned}
\left|\frac\partial{\partial z} P_\alpha(ze^{-i\theta})\right|
&=\frac{(1-|z^2|)^{\alpha}}{|1-ze^{-i\theta}|^{\alpha+3}}\left|(1+\alpha)\bar z(z-e^{i\theta})+1-|z|^2\right|\\
&=\frac{(1-r^2)^{\alpha}}{|1-\xi|^{\alpha+3}}\left|(1+\alpha)(r^2-\xi)+1-r^2\right|.
\end{aligned}
\end{equation}

Then a direct application of the equations (\ref{eq:zbar}) and \eqref{eq:exp2} in Lemma \ref{lem:exp}
yields that
\begin{equation}\label{eq:partial}
\begin{aligned}
|f_{\bar z}(z)|&=\frac1{2\pi}\left|\frac{\partial}{\partial\bar z}\int_0^{2\pi}P_\alpha(ze^{-i\theta})f^*(e^{i\theta})\dth\right|\\
&=\frac1{2\pi}\left|\int_0^{2\pi}\frac{\partial}{\partial\bar z}P_\alpha(ze^{-i\theta})f^*(e^{i\theta})\dth\right|\\
&\leq \frac{(1+\alpha)||f^*||_\infty}{2\pi}
\int_0^{2\pi}\frac{(1-r^2)^{\alpha}}
{|1-ze^{-i\theta}|^{\alpha+2}}\dth\\
&=\frac{(1+\alpha)||f^*||_\infty}{1-r^2}
\Gauss\left(-\frac{\alpha}{2},-\frac{\alpha}{2};1;r^{2}\right),
\end{aligned}
\end{equation}
for $z\in\D$.

On the other side, we make the following estimate by using
 the expression \eqref{eq:z}.
\begin{equation}\label{eq:partial1}
\begin{aligned}
|f_z(z)|&=\frac1{2\pi}\left|\frac{\partial}{\partial z}\int_0^{2\pi}P_\alpha(ze^{-i\theta})f^*(e^{i\theta})\dth\right|\\
&\le\frac{||f^*||_\infty}{2\pi}\int_0^{2\pi}\left|\frac{\partial}{\partial z}P_\alpha(ze^{-i\theta})\right|\dth\\
&=(1-r^2)^{\alpha}\frac{||f^*||_\infty}{2\pi}
\int_0^{2\pi}\frac{\left|(1+\alpha)(r^2-\xi)+1-r^2\right|}{|1-\xi|^{\alpha+3}}\dth.
\end{aligned}
\end{equation}

We divide the proof into two cases according to the sign of $\alpha$,
in order to estimate the size of  $|f_z(z)|$.

(I) The case $\alpha\geq 0$.

In combination of the inequality \eqref{eq:partial1}
and the identity \eqref{eq:exp} in Lemma \ref{lem:exp},
we have
 \begin{equation}\label{eq:partialz}
\begin{aligned}
|f_z(z)|&\leq \frac{||f^*||_\infty}{2\pi}\frac{(1-r^2)^{\alpha}\sqrt{(1+\alpha r^2)^2+(1+\alpha)^2r^2}}{(1+r^2)^{\frac{\alpha+3}{2}}}
\int_0^{2\pi}\frac{\left(1-R(\alpha)\cos\theta\right)^{\frac{1}{2}}}
{(1-R\cos\theta)^{\frac{\alpha+3}{2}}}\dth\\
&\le \frac{||f^*||_\infty(1+\alpha)(1-r^2)^{\alpha}}{(1+r^2)^{\frac{\alpha+2}{2}}}
\sum_{n=0}^{\infty}\sum_{m=0}^{2n}\binom{\frac{1}{2}}{m}\binom{-\frac{\alpha+3}{2}}{2n-m}
R(\alpha)^mR^{2n-m}\frac{(2n)!}{4^n(n!)^2}.
\end{aligned}
\end{equation}
since $(1+\alpha r^2)^2+(1+\alpha)^2r^2\leq (1+\alpha)^2(1+r^2)$, as $\alpha\geq0$.
An elementary calculation generates that
$$
R'(\alpha)
=\frac{2r(1-\alpha^2r^2)(1-r^2)^2}{\left[(1+\alpha r^2)^2+(\alpha+1)^2r^2\right]^2},
\quad
R\left(\frac{1}{r}\right)=1\geq R
$$
and $\displaystyle \lim_{\alpha\to+\infty}R(\alpha)=R$
which imply that
$
R(\alpha)\geq \min\{R(0),\displaystyle \lim_{\alpha\to+\infty}R(\alpha)\}=R
$
for $\alpha\geq0$.
Thus we find that
\begin{equation*}\label{eq:positive}
\sum_{m=0}^{2n}\binom{\frac{1}{2}}{m}\binom{-\frac{\alpha+3}{2}}{2n-m}
R(\alpha)^mR^{2n-m}
\leq \sum_{m=0}^{2n}\binom{\frac{1}{2}}{m}\binom{-\frac{\alpha+3}{2}}{2n-m}
R^{2n}
=\frac{\left(1+\frac{\alpha}{2}\right)_{2n}}{(2n)!}R^{2n},
\quad \alpha\geq0,
\end{equation*}
since the coefficients $\binom{\frac{1}{2}}{m}\binom{-\frac{\alpha+3}{2}}{2n-m}$ are negative for $m=1,2,\ldots,2n$.
Applying the last inequality to the estimate \eqref{eq:partialz}, we infer from the transforms (\ref{eq:Euler}) and \eqref{eq:quadratic} that
\begin{eqnarray*}
|f_z(z)|
&\leq&
 \frac{||f^*||_\infty(1+\alpha)(1-r^2)^{\alpha}}{(1+r^2)^{\frac{\alpha+2}{2}}}
\sum_{n=0}^{\infty}\left(1+\frac{\alpha}{2}\right)_{2n}
\frac{R^{2n}}{4^n(n!)^2}\\
&\leq& \frac{||f^*||_\infty(1+\alpha)(1-r^2)^{\alpha}}{(1+r^2)^{1+\frac{\alpha}{2}}}
\Gauss(\frac{\alpha}{4}+\frac{1}2,\frac{\alpha}4+1;1;R^2)\\
&=&\frac{||f^*||_\infty(1+\alpha)(1+r^2)^{\frac{\alpha}{2}}}{1-r^2}
\Gauss\left(\frac{1}2-\frac{\alpha}{4},-\frac{\alpha}4;1;R^2\right)\\
&=&\frac{||f^*||_\infty(1+\alpha)}{1-r^2}
\Gauss\left(-\frac{\alpha}{2},-\frac{\alpha}2;1;r^2\right).
\end{eqnarray*}
Finally we obtain the inequality \eqref{eq:esti1} for $\alpha\geq0$
by using the above estimate and the inequality \eqref{eq:partial}.

(II) The case $-1<\alpha<0$.

Note that the M\"{o}bius transform $\left[(1+\alpha)(r^2-\xi)+1-r^2\right]/(1-\xi)$
of $\xi$ satisfies that
\begin{eqnarray*}
\left|\frac{(1+\alpha)(r^2-\xi)+1-r^2}{1-\xi}\right|
&\leq& \max\left\{\frac{(1+\alpha)(r^2-r)+1-r^2}{1-r},
\frac{(1+\alpha)(r^2+r)+1-r^2}{1+r}\right\}\\
&<&1-\alpha,
\end{eqnarray*}
for $r=|\xi|\in[0,1)$ if $-1<\alpha<0$.
Thus we deduce from \eqref{eq:partial1} and \eqref{eq:exp2} in Lemma \ref{lem:exp} that
\begin{align*}
|f_z(z)|
&\leq||f^*||_\infty\frac{(1-\alpha) (1-r^2)^{\alpha}}{2\pi}
\int_0^{2\pi}\frac{\dth }{|1-\xi|^{\alpha+2}}\\
&=||f^*||_\infty\frac{1-\alpha}{1-r^2}\Gauss\left(-\frac{\alpha}{2},-\frac{\alpha}2;1;r^2\right).
\end{align*}
Therefore we arrive at the claimed assertion
\begin{align*}
&||D_f(z)||
\le\frac{2||f^*||_\infty}{1-r^2}\Gauss\left(-\frac{\alpha}{2},-\frac{\alpha}2;1;r^2\right)
\end{align*}
by \eqref{eq:partial} and the above inequality for $-1<\alpha<0$.

The proof is done.
\end{pf}

\noindent
{\bf Acknowledgements.}
The authors would like to thank
Professor Toshiyuki Sugawa and Dr. Qingtian Shi for discussions.

\def\cprime{$'$} \def\cprime{$'$} \def\cprime{$'$}
\providecommand{\bysame}{\leavevmode\hbox to3em{\hrulefill}\thinspace}
\providecommand{\MR}{\relax\ifhmode\unskip\space\fi MR }
\providecommand{\MRhref}[2]{%
  \href{http://www.ams.org/mathscinet-getitem?mr=#1}{#2}
}
\providecommand{\href}[2]{#2}

\end{document}